\documentclass[a4paper, oneside, reqno]{amsart}

\usepackage{hyperref}
\usepackage{amsmath, amsthm, amssymb}
\usepackage[numbers, square, sort&compress]{natbib}
\usepackage{enumitem}
\usepackage[reftex]{theoremref}
\usepackage{lipsum}

\newtheorem{lemma}{Lemma}
\newtheorem{theorem}{Theorem}

\theoremstyle{definition}
\newtheorem{definition}{Definition}

\newtheorem{example}{Example}

\newcommand{\R}{\ensuremath{\mathbb{R}}}
\newcommand{\N}{\ensuremath{\mathbb{N}}}
\DeclareMathOperator{\CBB}{CBB}
\DeclareMathOperator{\CAT}{CAT}
\DeclareMathOperator{\RCD}{RCD}
\DeclareMathOperator{\CD}{CD}

\DeclareMathOperator{\sn}{sn}

\DeclareMathOperator{\modang}{\tilde\measuredangle}
\DeclareMathOperator{\pack}{pack}
\DeclareMathOperator{\geod}{geod}

\title{Besicovitch covering on Alexandrov spaces}
\author{Maxime Marot}
\address{Christian-Albrechts-Universität zu Kiel, Heinrich-Hecht-Platz 6, 24118 Kiel}
\email{marot@math.uni-kiel.de}
\subjclass[2020]{28A75, 28A78, 53C23}
\keywords{Alexandrov space, Curvature bounded below, Directionally limited, Besicovitch covering}

\begin{document}

\begin{abstract}
We prove that finite-dimensional geodesic locally compact Alexandrov spaces with curvature bounded below (CBB),
and in particular complete Riemannian manifolds with lower bounded sectional curvature,
satisfy a Besicovitch covering theorem by showing that they are directionally limited in the sense of Federer.
\end{abstract}

\maketitle

\section{Introduction}

In the Euclidean space, it is well-known since Besicovitch \cite{besicovitchCovering1945, besicovitchCovering1946}
that the following covering theorem holds:

\begin{theorem}[Euclidean Besicovitch Covering Theorem]
There exists a universal constant $C=C(n)$ such that: for any bounded set $A\subset\R^n$ and a cover $\mathcal{F}$
of $A$ by closed balls such that every point of $A$ is the center of a ball in $\R^n$, then there exist $C$ subfamilies
$\mathcal{G}_1,\dots,\mathcal{G}_C$ of $\mathcal{F}$ each composed of pairwise disjoint balls such that
\[
A \subset \bigcup_{i=1}^C \bigsqcup_{B\in\mathcal{G}_i} B.
\]
\end{theorem}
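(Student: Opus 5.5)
The proof has two parts. First, a greedy selection extracts a subfamily of balls that covers $A$ with bounded overlap. Then a counting argument partitions that subfamily into $C(n)$ disjoint subfamilies. Since $A$ is bounded, we may assume the radii are bounded; otherwise a single ball already covers $A$. We may also assume each point $a\in A$ is the center of exactly one chosen ball $B(a,r_a)\in\mathcal F$.

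\textbf{Greedy selection.} Set $A_1=A$. At step $j$, choose $a_j\in A_j$ with
\[
r_{a_j}>\tfrac34\sup\{r_a:a\in A_j\},
\]
and set $A_{j+1}=A_j\setminus\bigcup_{i\le j}B(a_i,r_{a_i})$. This yields three properties.
\begin{itemize}
\item \emph{Separated centres:} for $i<j$ we have $a_j\notin B(a_i,r_{a_i})$, so $|a_i-a_j|>r_{a_i}$.
\item \emph{Almost decreasing radii:} for $i<j$, $r_{a_j}\le\tfrac43 r_{a_i}$.
\item \emph{Shrinking balls:} the balls $B(a_j,r_{a_j}/3)$ are pairwise disjoint, which follows from the first two properties.
\end{itemize}
Because $A$ is bounded, disjointness forces $r_{a_j}\to0$ when infinitely many steps occur. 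A point of $A$ left uncovered would then have a radius exceeding $\tfrac43 r_{a_j}$ for large $j$, contradicting the choice rule. Hence the $B_j=B(a_j,r_{a_j})$ cover $A$. If the process stops after finitely many steps, the cover is immediate.

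\textbf{Key lemma (main obstacle).} Fix $k$ and let $I=\{j<k: B_j\cap B_k\ne\emptyset\}$. The goal is $\#I\le L(n)$.

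Split $I$ into two sets. Let $I_1$ be the indices with $r_{a_j}\le 3r_{a_k}$. Each such $B(a_j,r_{a_j}/3)$ is a disjoint ball of radius at least $r_{a_k}/4$ contained in $B(a_k,5r_{a_k})$. A volume comparison then gives $\#I_1\le 20^n$.

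Let $I_2$ be the indices with $r_{a_j}>3r_{a_k}$. These big balls are handled by an angle argument. For $i<j$ in $I_2$, the angle at $a_k$ between $a_i-a_k$ and $a_j-a_k$ is bounded below by a universal constant $\theta_0>0$. This uses three facts:
\begin{itemize}
\item $|a_j-a_i|>r_{a_i}$;
\item $|a_i-a_k|\le r_{a_i}+r_{a_k}$, and similarly for $j$;
\item $|a_i-a_k|,|a_j-a_k|>r_{a_i},r_{a_j}$ respectively, since $a_k$ was not yet covered.
\end{itemize}
This is the delicate Euclidean trigonometry, done by cases according to whether $|a_j-a_k|\le|a_i-a_k|$. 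Once the angle bound holds, the unit vectors $(a_j-a_k)/|a_j-a_k|$ are $\theta_0$-separated on $S^{n-1}$, so $\#I_2$ is bounded by a packing number $M(n)$. Thus $L=20^n+M(n)$.

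\textbf{Colouring.} Set $C=L+1$. Assign the $B_j$ to $\mathcal G_1,\dots,\mathcal G_C$ inductively in order of $j$. Ball $B_k$ meets at most $L$ earlier balls, so some family among the $C$ contains none of them, and we put $B_k$ there. Each $\mathcal G_i$ then consists of pairwise disjoint balls, and together they cover $A$, as claimed.
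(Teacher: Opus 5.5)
Your proof is correct. It is the classical direct argument (as in Evans--Gariepy), and the trigonometric step you only sketch does go through with exactly the facts you list. Write $u=|a_i-a_k|$, $v=|a_j-a_k|$ and $\theta$ for the angle at $a_k$, so that $(u-v)^2+2uv(1-\cos\theta)=|a_i-a_j|^2>r_{a_i}^2$. If $v\le u$, then since $r_{a_i}\ge u-r_{a_k}$ you get $2uv(1-\cos\theta)>(v-r_{a_k})(2u-v-r_{a_k})>\tfrac49uv$. If $v>u$, then $0<v-u<\tfrac23r_{a_i}$ and $uv<\tfrac{25}{9}r_{a_i}^2$. Either way $\cos\theta<\tfrac{9}{10}$.

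The paper does not prove this statement at all; it is quoted as background. The paper's own route to Besicovitch-type results is directional limitedness plus Federer's covering lemma (\thref{thm:BesicovitchLemma}), which it cites rather than proves. In that framework the Euclidean case follows from \thref{thm:CBBDirLim} with $\kappa=0$. There $D=1$ and $\zeta=\lceil(9\pi^2/(4\eta))^n\rceil$ does not depend on $\xi$, so after your reduction to bounded radii one takes $\xi$ large and gets $C=2\zeta+1$.

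The two arguments are closely related. Your angle bound for the large earlier balls seen from $a_k$ is exactly the Euclidean content of \thref{def:DirLim}: in $\R^n$ the ratio $d(x,c)/d(a,c)$ is the distance between the unit directions of $b-a$ and $c-a$. Your packing count on the unit sphere corresponds to \thref{th:BGpack}, with Bishop--Gromov packing at scale $s$ replacing sphere packing. Likewise, the model-angle monotonicity in \thref{thm:distBdBelow} replaces your Euclidean trigonometry.

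Your route is self-contained, gives explicit constants, and actually carries out the selection and colouring that the paper delegates to Federer. The paper's route isolates the single geometric input those steps need. That is what lets it work in CBB spaces, where there is no linear structure in which to run your case analysis.
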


Herbert Federer introduced the directional limitedness of a metric in \cite{federerGMT1996}
as a sufficient geometric condition on a metric space for Besicovitch-type covering theorems
(see Theorems \ref{thm:BesicovitchLemma} and \ref{thm:BesicovitchThm} below).
The precise definition is as follows:

\begin{definition}[{\cite[Definition 2.8.9]{federerGMT1996}}]
\th\label{def:DirLim}
Let $(X,d)$ be a metric space and let $A\subset X, \xi>0, 0<\eta\leq 1/3$ and $\zeta\in\mathbb{N}^*$.
We say that $d$ is \emph{directionally $(\xi,\eta,\zeta)$-limited at $A$} if the following condition holds:
If for any $a\in A$, any $E\subset A \cap B(a,\xi)\setminus\{a\}$, and if
\[
\frac{d(x,c)}{d(a,c)} \geq \eta
\]
whenever $x\in X, b,c\in E$ with $b\neq c$, $d(a,b)\geq d(a,c)$ and
\[
d(a,x) = d(a,c), \quad d(x,b) = d(a,b) - d(a,c),
\]
then $\# E \leq\zeta$.
More generally, a metric space $(X,d)$ is called \emph{directionally limited} if $d$ is $(\xi,\eta,\zeta)$-directionally limited
at $X$ for some $(\xi, \eta,\zeta)$.
\end{definition}

Intuitively, a metric is directionally limited if, from any point, only a uniformly bounded number of distinct
``directions'' can be observed by the metric at small scales.

The goal of this paper is to prove:

\begin{theorem}
\th\label{thm:CBBDirLim}
Let $(X,d)$ be an $n$-dimensional geodesic locally compact locally $\CBB(\kappa)$ space and let $0<\xi\leq\varpi^\kappa/2$
and $0<\eta\leq 1/3$.
Then there exists a positive integer $\zeta=\zeta(\kappa, n,\xi,\eta)$ such that $d$ is $(\xi, \eta,\zeta)$-directionally limited at $X$.
\end{theorem}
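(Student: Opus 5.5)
The plan is to turn the directional-limitedness condition into a lower bound on angles at $a$ between geodesics, and then use a packing bound in the space of directions. Fix $a\in X$ and $E\subset B(a,\xi)\setminus\{a\}$ satisfying the hypothesis of \ref{def:DirLim}. For each $b\in E$ choose a geodesic $\gamma_b$ from $a$ to $b$; its direction $\gamma_b'(0)$ lies in the space of directions $\Sigma_a$. Since $X$ is $n$-dimensional and $\CBB(\kappa)$, $\Sigma_a$ is a compact Alexandrov space of dimension $n-1$ with curvature $\geq 1$. By Bishop--Gromov volume comparison, applied to $\Sigma_a$ or done by induction on dimension, the number of points in $\Sigma_a$ with pairwise angle $\geq\theta$ is at most some $N(n,\theta)$, independent of $a$ and of $\kappa$. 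It therefore suffices to find $\theta=\theta(\kappa,n,\xi,\eta)>0$ such that $\angle(\gamma_b'(0),\gamma_c'(0))\geq\theta$ for all distinct $b,c\in E$. We then take $\zeta=N(n,\theta)$.

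To get the angle bound, take distinct $b,c\in E$ with $d(a,b)\ge d(a,c)$. Let $x=\gamma_b(d(a,c))$ be the point on $\gamma_b$ at distance $d(a,c)$ from $a$. Then $d(a,x)=d(a,c)$ and $d(x,b)=d(a,b)-d(a,c)$, so $x$ is admissible and the hypothesis gives $d(x,c)\ge\eta\, d(a,c)$. We now have a geodesic hinge at $a$ with both sides of length $r=d(a,c)\le\xi$ and endpoints $x$ and $c$ at distance $\ge\eta r$.

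Next we apply Toponogov's hinge comparison in the $\CBB(\kappa)$ form. The distance $d(x,c)$ is at most the third side of the model hinge in $M^2_\kappa$ with sides $r,r$ and angle $\alpha=\angle(\gamma_b'(0),\gamma_c'(0))$. The restriction $\xi\le\varpi^\kappa/2$ makes the model triangle well defined and ensures that the third side depends monotonically on $\alpha$. For $\kappa>0$ there is also a perimeter constraint, which holds since $2r<\varpi^\kappa$. So the model distance $\tilde d_\kappa(r,r,\alpha)$ is $\ge\eta r$. As $\alpha\to0$, uniformly for $r\in(0,\xi]$, we have $\tilde d_\kappa(r,r,\alpha)/r\to0$; for example, $\tilde d_\kappa(r,r,\alpha)\le C(\kappa,\xi)\,r\alpha$ by comparison with the explicit law of cosines. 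This gives $\alpha\ge\theta:=\eta/C(\kappa,\xi)$.

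The step I expect to be the main obstacle is justifying the comparison locally. The space is only locally $\CBB(\kappa)$, and the comparison has to hold on balls of radius $\xi$ around every point. The plan is to invoke the globalization theorem, which states that a complete geodesic locally $\CBB(\kappa)$ space is globally $\CBB(\kappa)$; local compactness together with the geodesic property gives completeness of balls in the sense needed. Alternatively, one can restrict to the convex-enough neighbourhoods where comparison holds, shrinking constants if necessary. A secondary point is the uniformity of the packing constant $N(n,\theta)$ in $\Sigma_a$: I will use the standard fact that the volume of $\Sigma_a$ is at most that of $S^{n-1}$, while $\theta/2$-balls in $\Sigma_a$ have volume bounded below by Bishop--Gromov relative to the total volume. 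This gives $N\le \mathrm{vol}(S^{n-1})/\mathrm{vol}_{1}(B(\theta/2))$ up to the relative volume ratio, which is uniform in $a$.
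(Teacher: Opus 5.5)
Your proof is correct in its core, but the second half takes a different route from the paper. The first step is the same as the paper's. Take $x$ to be the point of $[ab]$ at distance $r=d(a,c)$ from $a$. Comparison then gives $\eta r\leq d(x,c)\leq\tilde\gamma^\kappa\{\alpha;r,r\}\leq\alpha\,\sn^\kappa(r)\leq \alpha\, r\cosh(\sqrt{\kappa^-}\xi)$, so $\alpha\geq\eta/\cosh(\sqrt{\kappa^-}\xi)$, which is exactly the paper's $\chi$.

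The paper, however, never uses $\Sigma_a$. It uses monotonicity of model angles to push the bound to every finite scale $s\leq\min_i d(a,e_i)$. Explicit model trigonometry then gives $d(\gamma_i(s),\gamma_j(s))\geq 4\chi s/\pi^2$. It packs the points $\gamma_i(s)$ inside $B[a,s]\subset X$ using only the Bishop--Gromov doubling of $\mathcal{H}^n$, which is uniform because $s\leq\xi$.

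Your argument is the $s\to0$ limit of this. It needs the structure theorem that $\Sigma_a$ is an $(n-1)$-dimensional Alexandrov space of curvature $\geq1$, which is heavier machinery. In exchange it gives a sharper constant, $\zeta\leq\mathrm{vol}(S^{n-1})/\mathrm{vol}\,B_{S^{n-1}}(\chi/2)$, which is of order $\chi^{1-n}$: exponent $n-1$ and no factor $D(\kappa,n,4\xi)$. The paper's argument, by contrast, only needs model-angle monotonicity and a doubling measure at finite scales.

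The step that fails as written is your justification of comparison at scale $\xi$. Local compactness plus the geodesic property does not give completeness; an open ball in $\R^n$ is a counterexample. So the classical globalization theorem, which assumes completeness, does not apply to $X$.

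Shrinking to comparison neighbourhoods does not rescue this either. The scale $\xi$ is prescribed, and the hinge at $a$ really spans distances up to $\xi$. The size of the neighbourhoods where comparison holds need not be bounded below uniformly in $a$.

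The correct tool, which the paper uses, is Petrunin's globalization for non-complete but geodesic spaces. It says the completion $\bar X$ of a geodesic locally $\CBB(\kappa)$ space is a complete length $\CBB(\kappa)$ space; here $\bar X$ is also proper and $n$-dimensional. Since $X$ is geodesic, $d$ is the restriction of $\bar d$, so geodesics of $X$ remain geodesics of $\bar X$. The admissible point $x=\gamma_b(d(a,c))$ also lies in $X$. So you can run the hinge comparison and the packing in $\Sigma_a$ inside $\bar X$, and the bound transfers back to $X$.
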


In particular, the conclusion also holds for complete length CBB spaces of finite dimension
\cite[Corollary 15.14]{alexanderAlexandrovGeometry2024} and complete Riemannian manifolds with sectional curvature bounded below.
In fact, the Riemannian case was previously studied in the literature by Takahashi-Yamamoto \cite{takahashiSolvability2023} under much stronger
hypotheses (complete, $\operatorname{inj}>0$ and $|\operatorname{sec}|\leq\kappa$).
A Besicovitch-type covering theorem was also directly shown by Jost-Vân Lê-Dat Tran in \cite{jostDifferentiation2020} for
complete Riemannian manifolds and balls with radii bounded by $\operatorname{inj}/4$.

Supposing now that we have fixed a finite-dimensional geodesic locally compact locally CBB space $(X,d)$,
\thref{thm:CBBDirLim} yields the covering theorems:

\begin{theorem}[Besicovitch's Covering Lemma, {\cite[Theorem 2.8.14]{federerGMT1996}}]
\th\label{thm:BesicovitchLemma}
Let $A\subset X$ and let $\mathcal{F}$ be a family of closed balls of radii uniformly bounded by a constant $\leq\xi/2$
such that each point of $A$ is the center of a ball in $\mathcal{F}$.
Then, there exists $2\zeta+1$ countable subfamilies $\mathcal{G}_1,\dots,\mathcal{G}_{2\zeta+1}$ of $\mathcal{F}$ each composed
of pairwise disjoint balls such that
\[
A \subset \bigcup_{i=1}^{2\zeta+1} \bigsqcup_{B\in\mathcal{G}_i} B.
\]
\end{theorem}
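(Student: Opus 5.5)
This statement is Federer's Theorem 2.8.14, applied to our space. So the plan is to check its hypotheses and then reproduce Federer's argument in outline, using \thref{thm:CBBDirLim} as the only geometric input.

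Federer's theorem needs a metric space that is directionally $(\xi,\eta,\zeta)$-limited at $A$, together with a family $\mathcal{F}$ of closed balls centered at the points of $A$ with radii at most some $R\le \xi/2$. \thref{thm:CBBDirLim} gives, for our fixed $X$ and any admissible $\xi\le\varpi^\kappa/2$, $\eta\le 1/3$, a number $\zeta$ such that $d$ is directionally $(\xi,\eta,\zeta)$-limited at all of $X$. The definition only concerns subsets $E\subset A\cap B(a,\xi)$, so the property passes to every subset $A\subset X$. Federer works in an arbitrary metric space with no completeness or properness assumption, so nothing else needs checking.

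For the reader's benefit, the argument behind Federer's result runs as follows.
\begin{enumerate}
\item Since $X$ is locally compact and separable, reduce first to $A$ bounded, and then to the general case by a countable decomposition.
\item Choose balls inductively by a greedy rule. At each step pick a center $a_k\in A\setminus\bigcup_{j<k}B_j$ whose radius $r_k$ exceeds $\tfrac{1}{1+\eta'}\sup$ of the remaining radii, for a suitable small $\eta'$ depending on $\eta$.
\item Show that the selected balls cover $A$. If $r_k\to 0$, any uncovered point would eventually be eligible, giving a contradiction. Otherwise use the boundedness of $A$ and the separation between selected centers.
\item Bound how many earlier balls $B_j$ with $j<k$ can meet $B_k$ by $2\zeta$.
\item Colour the balls greedily with $2\zeta+1$ colours, processing them in order of $k$. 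This yields the disjoint subfamilies $\mathcal{G}_i$, each countable.
\end{enumerate}

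The main obstacle is step 4, which is Federer's key lemma 2.8.13. Split the earlier balls meeting $B_k$ into two groups: those with $d(a_j,a_k)$ comparable to $r_k$, and those with $d(a_j,a_k)$ much larger. Then show that each group, viewed from $a_k$, satisfies the separation hypothesis in \thref{def:DirLim}. That is, for $b,c$ in the group and $x$ on a ``geodesic-like'' point with $d(a,x)=d(a,c)$ and $d(x,b)=d(a,b)-d(a,c)$, one needs $d(x,c)\ge\eta\, d(a,c)$. This follows from the facts that $c$ lies outside the ball around $b$ chosen earlier and that radii are almost monotone. Hence each group has cardinality at most $\zeta$, for a total of $2\zeta$.

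The case where the radii are not bounded below by a positive constant on a region requires splitting into annuli as in Federer's proof. I would simply cite \cite[2.8.14]{federerGMT1996} for these details.
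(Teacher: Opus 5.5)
Your proposal is correct and matches the paper, which gives no separate argument: it observes that \thref{thm:CBBDirLim} makes $d$ directionally $(\xi,\eta,\zeta)$-limited at all of $X$, hence at any $A\subset X$, and then invokes \cite[Theorem 2.8.14]{federerGMT1996}. One caution about your optional sketch: reducing to bounded $A$ and recombining a countable decomposition (e.g.\ into annuli) only gives $2(2\zeta+1)$ families, since balls from adjacent pieces may meet, so keeping $2\zeta+1$ requires running the greedy selection on all of $A$ at once (indexed by ordinals where a sequence does not exhaust $A$), and you should rely on the citation rather than on your step~1.
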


\begin{theorem}[Besicovitch's Covering Theorem, {\cite[Corollary 2.8.15]{federerGMT1996}}]
\th\label{thm:BesicovitchThm}
Let $A\subset X$ and let $\mathcal{F}$ be a family of closed balls with radii uniformly bounded by a constant $\leq \xi/2$
such that each point of $A$ is the center of a ball and
\[
\inf \{r>0 | B[a,r] \in \mathcal{F} \} = 0,\quad \forall a \in A,
\]
and let $\mu$ be a positive Borel measure on $X$ with $\mu(A)<+\infty$.
Then, for any open set $U\subset X$, there exists a countable subfamily $\mathcal{G}\subset\mathcal{F}$ of pairwise disjoint balls such that
\[
\bigsqcup_{B\in\mathcal{G}} B\subset U \quad\text{and}\quad \mu\left( (U\cap A) \setminus \bigsqcup_{B\in\mathcal{G}} B \right) = 0.
\]
\end{theorem}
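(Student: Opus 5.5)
The plan is to derive the statement from \thref{thm:BesicovitchLemma} by the classical Vitali-type exhaustion argument, as in Federer's deduction of Corollary 2.8.15 from Theorem 2.8.14. Set $N=2\zeta+1$. Throughout, $\mu$ is treated as an outer measure, so that $\mu(A)$ makes sense for an arbitrary $A$. Closed balls are closed sets, hence Borel, hence Carath\'eodory measurable. So for every closed ball $B$ and every set $S$ we have $\mu(S)=\mu(S\cap B)+\mu(S\setminus B)$, and the same holds for finite and countable unions of such balls.

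\emph{One step of the construction.} Let $V\subset X$ be open and $S=A\cap V$ with $\mu(S)>0$. Let $\mathcal{F}_V$ be the set of balls $B[a,r]\in\mathcal{F}$ with $a\in S$ and $B[a,r]\subset V$. Since $V$ is open and $\inf\{r : B[a,r]\in\mathcal{F}\}=0$ for every $a\in A$, each point of $S$ is the center of some ball in $\mathcal{F}_V$. The radii are still bounded by a constant $\leq\xi/2$.

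By \thref{thm:BesicovitchLemma} applied to $S$ and $\mathcal{F}_V$, we get disjoint countable subfamilies $\mathcal{G}_1,\dots,\mathcal{G}_N$ whose unions cover $S$. By subadditivity, $\mu(S)\le\sum_i\mu(S\cap\bigcup\mathcal{G}_i)$, so some $i$ satisfies $\mu(S\cap\bigcup\mathcal{G}_i)\ge\mu(S)/N$. Since $\mathcal{G}_i$ is countable and $\mu(S)<\infty$, continuity from below on the measurable sets $\bigcup$(finite subfamily) gives a \emph{finite} subfamily $\mathcal{H}\subset\mathcal{G}_i$ with $\mu(S\cap\bigcup\mathcal{H})\ge\mu(S)/(2N)$. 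Carath\'eodory measurability then yields
\[
\mu\Bigl(S\setminus\bigcup\mathcal{H}\Bigr)\le\theta\,\mu(S),\qquad \theta:=1-\tfrac{1}{2N}<1 .
\]

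\emph{Iteration.} Put $U_1=U$. Given $U_k$, apply the step to $V=U_k$ to get a finite disjoint family $\mathcal{H}_k$ of balls contained in $U_k$. Set $U_{k+1}=U_k\setminus\bigcup\mathcal{H}_k$. If $\mu(A\cap U_k)=0$ at some stage, stop.

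Because $\mathcal{H}_k$ is finite, $\bigcup\mathcal{H}_k$ is closed, so $U_{k+1}$ is again open; this is the point where finiteness is essential. Every ball chosen at a later stage lies in $U_{k+1}$, so it is disjoint from all earlier balls. Consequently $\mathcal{G}=\bigcup_k\mathcal{H}_k$ is a countable family of pairwise disjoint balls, and all of them lie in $U$.

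By induction,
\[
\mu(A\cap U_{k+1})\le\theta^k\mu(A\cap U)\le\theta^k\mu(A).
\]
Moreover, $(U\cap A)\setminus\bigcup\mathcal{G}\subset A\cap U_{k+1}$ for every $k$. Letting $k\to\infty$ gives $\mu\bigl((U\cap A)\setminus\bigsqcup_{B\in\mathcal{G}}B\bigr)=0$.

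\emph{Main obstacle.} The only delicate points are measure-theoretic. First, one needs the Carath\'eodory splitting for a possibly non-measurable $A$, which is handled by the measurability of closed balls. Second, one must pass to a finite subfamily so that the remaining region $U_{k+1}$ stays open. Openness is what allows the fine-cover hypothesis to supply, at every stage, small balls centered at the remaining points and contained in $U_{k+1}$. The geometry of the space enters only through \thref{thm:BesicovitchLemma}, that is, through the constant $N=2\zeta+1$ coming from \thref{thm:CBBDirLim}.
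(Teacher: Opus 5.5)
Your argument is correct and follows essentially the paper's route. The paper gives no separate proof; it obtains the statement from \thref{thm:CBBDirLim} via Federer's Corollary 2.8.15, whose proof is this same exhaustion from the covering lemma (\thref{thm:BesicovitchLemma}): choose the subfamily carrying at least a $1/(2\zeta+1)$ fraction of $\mu(A\cap U_k)$, truncate to a finite subfamily so the leftover region stays open, and iterate to get geometric decay. Your measure-theoretic details are also sound, namely treating $\mu$ as an outer measure for non-measurable $A$, Carath\'eodory splitting along closed balls, and continuity from below for $\mu(S\cap\cdot)$ along measurable sets.
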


Other notable implications of directional limitedness are:
\begin{itemize}
\item
Characterization of the singular part in the Lebesgue decomposition of any pair of Borel measures \cite[Theorem 2.9.15]{federerGMT1996},
\item
The set of finite Borel measures over $(X,d)$ endowed with the Carathéodory distance is complete \cite{dorschCompleteness2023},
\item
Any positive Borel measure finite on bounded sets can be recovered from its values on closed balls \cite{buetApproximate2016},
\item
Any locally finite Borel measure admits a Lebesgue differentiation theorem \cite[Theorem 5.2]{rigotDifferentiation2022}.
\end{itemize}

The finite-dimensional CBB condition is the right setting for directional limitedness in Alexandrov geometry.
The following examples inspired from \cite{buetApproximate2016} show that infinite-dimensional and $\CAT(0)$ spaces
need not be directionally limited.

\begin{example}
Let $(H,\lVert\cdot\rVert)$ be a separable Hilbert space with Hilbert basis $\mathcal{B}=(e_i)_{i\in\N}$.
The space $H$ is both $\CBB(0)$, $\CAT(0)$ and $\RCD(1, \infty)$ however it is not directionally limited.
Indeed, fix $\xi>0$ and let $E=\{\xi e_i\}_{i\in\N}$.
Pick any $a\in H$ and set $b=a+\xi e_j, c=a+\xi e_k$.
As the Hilbert norm $\lVert\cdot\rVert$ is strictly convex, the point $x$ in \thref{def:DirLim} is uniquely defined,
see \cite[2.8.9]{federerGMT1996}, by
\[
x = a + \frac{\lVert c-a \rVert}{\lVert b-a \rVert} e_j = a + \xi e_j.
\]
But
\[
\frac{d(x,c)}{d(a,c)} = \frac{\lVert \xi e_j - \xi e_k \rVert}{\lVert \xi e_j\rVert}= \lVert e_j - e_k \rVert = \sqrt{2} \geq \eta,
\]
and $\# E$ is not finite, so $H$ cannot be directionally limited.
\end{example}

\begin{example}
Let $\rho(r,\theta)=re^{i\theta}$ be the radial coordinate map of $\R^2$ and let $(u_n)_{n\geq 1}$
be a sequence in $[0,\pi/2]$ converging to $0$ ($(1/n)_{n\geq 1}$ does the job).
Define $X$ to be the image by $\rho$ of the compact set
\[
K := \big( [0,1] \times \{0, u_1, u_2, \dots \} \cap \{ (r,\theta)\in[0,1]\times[0,\pi/2] \mid r\leq \theta \} \big)
\cup \big([0,1]\times \{0\}\big),
\]
and endow $X$ with length distance induced by the Euclidean distance in $\R^2$.
The space $X$ is a metric tree rooted at the origin, here named $a$, and thus is $\CAT(0)$, of Hausdorff dimension $1$.
Moreover, $(X,d)$ is compact as every sequence in $X$ admits a convergent subsequence.
It is clear that $d$ cannot be directionally limited at $a$.
Indeed, consider the set $E=\{y\in X|d(a,y)<\xi\}$ and any pair of points $b,c\in E$ lying on different branches.
One has $d(x,c)=2d(a,c)$ and so
\[
\frac{d(x,c)}{d(a,c)} = 2 > \eta.
\]
But $\# E$ is not finite.
\end{example}

The next examples shows the necessity of a lower curvature bound to obtain a Besicovitch cover.

\begin{example}
If $(M^n,g)$ is a simply connected Riemannian manifold with nonpositive sectional curvature and covers a compact manifold
such that $M^n$ satisfies a Besicovitch covering Lemma then
$(M^n,g)$ is isometric to $\R^n$, see \cite[Theorem 1]{chiBesicovitch1991}.
\end{example}

\begin{example}
Let $\mathbb{H}$ be the Heisenberg group endowed with its Carnot-Carathéodory distance $d_{\mathrm{CC}}$.
The space $(\mathbb{H}, d_{\mathrm{CC}})$ is neither $\CAT(\kappa)$, because geodesics are not unique, nor $\CBB(\kappa)$.
In fact, Juillet proved in \cite{juilletHeisenberg2009} that $\mathbb{H}$ satisfies the \textit{measure contraction property}
$\operatorname{MCP}(0, 5)$ but never the Lott-Sturm-Villani condition $\CD(K,N)$.
Yet this is not sufficient for Besicovitch's covering lemma to hold, as shown by Korányi-Riemann in \cite[Part 1.4]{koranyiQuasiconformalHeisenberg1995}.
\end{example}

Concerning synthetic Ricci curvature bounds, note that in dimension 2, $\CBB(\kappa)$ and $\RCD(\kappa, 2)$ conditions
coincide \cite{petruninAlexandrovMeetsLottVillaniSturm2011, lytchakRicciCurvatureDimension2022}.
Thus $\RCD(\kappa, 2)$ spaces are directionally limited.
However, for higher dimensional $\RCD(\kappa, N)$ spaces the question remains open.

\section{Preliminaries}

We recall some elementary facts about metric geometry and CBB spaces.
One can find exhaustive introductions in \cite{buragoMetricGeometry2001} and \cite{alexanderAlexandrovGeometry2024} for example.

\subsection{Metric spaces and notations}

Recall that $(\cdot)^-=-\min(\cdot, 0)$.

Let $(X,d)$ be a metric space.
Distances are always assumed to be finite.
We denote by $B(x,r)$ (resp. $B[x,r]$) the open (resp. closed) ball centered at $x\in X$ with radius $r>0$.
Let $\mathbb{M}^n(\kappa)$ denote the simply connected $n$-dimensional manifold of constant curvature $\kappa$ and
write $\varpi^\kappa$ its diameter.
Let $x,y,z\in X$.
We denote by $\tilde\Delta^\kappa(xyz)$ the \textit{model triangle} associated to the points $x,y,z$ in $\mathbb{M}^2(\kappa)$.
The \textit{model angle}, denoted by $\modang^\kappa(x_y^z)$, is the inner angle at the vertex corresponding to $x$
in $\tilde\Delta^\kappa(xyz)$.
For any $\theta\in[0,\pi]$ and $a,b>0$, we denote by $\tilde\gamma^\kappa\{\theta; a,b \}$ the length of the side opposite to
the vertex of inner angle $\theta$ and of adjacent sides of length $a$ and $b$ in $\mathbb{M}^2(\kappa)$.
For any $x,y\in X$, the notation $[xy]$ denotes a choice of geodesic if it exists and $\geod_{[xy]}$ is its parametrization by arc length.

The \textit{modified sine function} $\sn^\kappa$ is defined, for any $\kappa\in\R$, by
\[
\sn^\kappa(t) :=
\begin{cases}
t, & \text{if }\kappa = 0, \\
\frac{1}{\sqrt{\kappa}} \sin(\sqrt{\kappa}\cdot t), & \text{if }\kappa >0, \\
\frac{1}{\sqrt{|\kappa|}} \sinh(\sqrt{|\kappa|}\cdot t), & \text{if }\kappa < 0,
\end{cases}
\]

\subsection{Curvature bounded below}

\begin{definition}
Let $\kappa\in\R$.
A metric space $(X,d)$ is said to have \emph{curvature bounded below by $\kappa$} (abbreviated $\CBB(\kappa)$)
if for every quadruple of points $p,x_1,x_2,x_3\in X$ the following condition holds:
\[
\modang^\kappa(p^{x_1}_{x_2}) + \modang^\kappa(p^{x_2}_{x_3}) + \modang^\kappa(p^{x_3}_{x_1}) \leq 2\pi,
\]
or one of the angles $\modang^\kappa(p^{x_i}_{x_j})$ is not defined.
The space $(X,d)$ is \emph{locally $\CBB(\kappa)$} if for every $p\in X$ there exists a neighborhood $U\ni p$ such that $U$
is $\CBB(\kappa)$.
More generally, $(X,d)$ is called \textit{CBB} provided it is $\CBB(\kappa)$ for any $\kappa$.
\end{definition}

Suppose that the model triangle $\tilde\Delta^\kappa(xyz)$ is defined and the geodesics $[xy]$ and $[xz]$ exist.
Then, if $(X,d)$ is $\CBB(\kappa)$, the map
\[
(t,t') \mapsto \modang^\kappa\left(x_{\geod_{[xy]}(t)}^{\geod_{[xz]}(t')}\right)
\]
is non-increasing with respect to each of its two variables \cite[Theorem 8.17]{alexanderAlexandrovGeometry2024}.

For any $(X,d)$ geodesic $\CBB(\kappa)$ space, the completion $(\bar X, \bar d)$ is a length $\CBB(\kappa)$ space
\cite{petruninGlobalizationNoncomplete2016}.
Moreover, if $X$ is locally compact and has Hausdorff dimension $n$, then $X$ is open in $\bar X$ and $\bar X$ also has dimension $n$.
As $\bar X$ is a complete length $\CBB(\kappa)$ space it is proper and geodesic \cite[Corollary 15.14]{alexanderAlexandrovGeometry2024}.

On a $n$-dimensional complete length CBB space $(X,d)$ the $n$-dimensional Hausdorff measure is a Radon measure with full support,
i.e.\ $0<\mathcal{H}^n(B(p,r))<\infty$ for any $p\in X, r>0$.
Every point of a dense subset \cite[Corollary 6.7]{buragoMetricGeometry2001} is contained in a neighborhood that is bi-Lipschitz equivalent to an open set of $\R^n$
\cite[Theorem 5.4]{buragoADAlexandrovSpaces1992} and, by Bishop inequality \cite[Theorem 10.6.8]{buragoMetricGeometry2001},
$\mathcal{H}^n(B(p,r))\leq v_{\kappa, n}(r)$.
By Stone's theorem $X$ is paracompact, by \cite[Theorem 5.1.27]{engelkingTopology1989} and by connectedness, $X$ is Lindelöf, and
by local compactness, this latter is equivalent to $\sigma$-compactness.
The Radon property follows by \cite[Theorem 2.18]{rudinAnalysis1966}.
Note that Lindelöf property is also equivalent to $X$ separable.

The separability argument applies verbatim under the milder hypothesis of a geodesic locally complete CBB space.

From Bishop-Gromov \cite[Thm 10.6.6]{buragoMetricGeometry2001} and \cite{sturmGeometryII2006},
we conclude that $\mathcal{H}^n$ is locally doubling:

\begin{theorem}
\th\label{thm:BishopGromov}
Let $(X,d)$ be a complete locally compact $\CBB(\kappa)$ space of dimension $n$.
Then, for any $0<r<R<L$,
\[
\frac{\mathcal{H}^n(B(p, R))}{\mathcal{H}^n(B(p,r))}
\leq \frac{v_{n,\kappa}(R)}{v_{n,\kappa}(r)}
\leq D(\kappa, n, L) \left( \frac{R}{r} \right)^n,
\]
where
\[
D(\kappa, n, L) =
\begin{cases}
1, & \text{if }\kappa \geq 0 \text{ or } n=1, \\
\cosh\left(L \sqrt{\frac{|\kappa|}{n-1}}\right)^{n-1}, & \text{if }\kappa<0.
\end{cases}
\]
\end{theorem}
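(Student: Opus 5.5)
The plan is to derive the first inequality directly from the Bishop--Gromov relative volume comparison, and the second from an elementary comparison of the model volume functions. For the first, I would invoke \cite[Thm 10.6.6]{buragoMetricGeometry2001}. For an $n$-dimensional complete $\CBB(\kappa)$ space it states that $r\mapsto \mathcal{H}^n(B(p,r))/v_{n,\kappa}(r)$ is non-increasing in $r$. Here $v_{n,\kappa}(r)=\omega_{n-1}\int_0^r \sn^\kappa(t)^{n-1}\,dt$ is the volume of an $r$-ball in $\mathbb{M}^n(\kappa)$, and for $\kappa>0$ one uses the convention $r\leq\varpi^\kappa$. The preliminaries guarantee that $\mathcal{H}^n$ is Radon with full support, so all the ratios are well defined and positive. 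Comparing the values at $r<R$ and rearranging gives $\mathcal{H}^n(B(p,R))/\mathcal{H}^n(B(p,r))\leq v_{n,\kappa}(R)/v_{n,\kappa}(r)$.

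For the second inequality, I substitute $t=Rs$ and $t=rs$ in the two integrals:
\[
\frac{v_{n,\kappa}(R)}{v_{n,\kappa}(r)}=\left(\frac{R}{r}\right)^n\frac{\int_0^1 g(Rs)^{n-1}\,ds}{\int_0^1 g(rs)^{n-1}\,ds},\qquad g(t):=\frac{\sn^\kappa(t)}{t}.
\]
The argument then splits into three cases.
\begin{itemize}
\item If $n=1$, then $v_{1,\kappa}(r)=2r$ exactly, and $D=1$ works.
\item If $\kappa\geq 0$, then $g$ is non-increasing on $(0,\varpi^\kappa]$, because $\sin$ is concave on $[0,\pi]$. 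Hence $g(Rs)\leq g(rs)$ pointwise, the integral quotient is at most $1$, and $D=1$.
\item If $\kappa<0$, write $c=\sqrt{|\kappa|}$, so that $g(t)=\sinh(ct)/t$. Then $1\leq g(t)\,t\cdot\frac{1}{ct}\cdot c$, that is, $1\leq \sinh(x)/x\leq\cosh(x)$ with $x=ct$. Consequently $g(Rs)/g(rs)\leq \cosh(cRs)\leq\cosh(cL)$ for $s\in[0,1]$, and the integral quotient is at most $\cosh(cL)^{n-1}$.
\end{itemize}

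The main obstacle I expect is matching the constant $\cosh\bigl(L\sqrt{|\kappa|/(n-1)}\bigr)^{n-1}$ claimed in the statement. The computation above, in the sectional-curvature normalization, only yields $\cosh(L\sqrt{|\kappa|})^{n-1}$. The stated factor $\sqrt{|\kappa|/(n-1)}$ is the Ricci normalization of \cite{sturmGeometryII2006}, where the bound $K=(n-1)\kappa$ enters as the model space of curvature $K/(n-1)$.

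I would handle this as follows. First, I would note that a $\CBB(\kappa)$ space satisfies $\CD((n-1)\kappa,n)$ via Petrunin's result. Then I would either rerun the computation with $K$ in place of $\kappa$, or state explicitly which normalization of $\kappa$ the constant refers to. Either way, the constant depends only on $(\kappa,n,L)$, and only that dependence is needed downstream for local doubling.
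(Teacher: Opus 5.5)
Your argument is essentially the paper's. The paper gives no proof beyond citing \cite[Thm 10.6.6]{buragoMetricGeometry2001} for the first inequality and Sturm's doubling estimate for the second, and your model-space computation is what underlies the latter.

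\textbf{Two slips.} Neither affects the argument.
\begin{enumerate}
\item The substitution $t=Rs$ gives $\int_0^R\sn^\kappa(t)^{n-1}\,dt=R^n\int_0^1 s^{n-1}g(Rs)^{n-1}\,ds$, so your displayed identity drops the weight $s^{n-1}$. Since the weight is common to numerator and denominator, the pointwise comparison of $g(Rs)$ with $g(rs)$ still bounds the quotient.
\item For $\kappa<0$ you need $g(t)=\sinh(ct)/(ct)$, not $\sinh(ct)/t$. The garbled chain ``$1\leq g(t)\,t\cdot\frac{1}{ct}\cdot c$'' should simply read $1\leq g(t)\leq\cosh(ct)$.
\end{enumerate}

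\textbf{The constant.} Your suspicion is justified, and this is more than a normalization ambiguity. In the statement, $\kappa$ is the $\CBB$ bound and $v_{n,\kappa}$ is the ball volume in $\mathbb{M}^n(\kappa)$. With that reading, the factor $\cosh\bigl(L\sqrt{|\kappa|/(n-1)}\bigr)^{n-1}$ is false for $n\geq 3$ and $\kappa<0$.

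Take $X=\mathbb{M}^3(-1)$. There the first inequality is an equality and $v_{3,-1}(\rho)=\pi(\sinh 2\rho-2\rho)$. Letting $r\to0$ gives $(r/R)^3\,v_{3,-1}(R)/v_{3,-1}(r)\to 3(\sinh 2R-2R)/(4R^3)$, which grows like $e^{2R}$. The claimed bound $\cosh(L/\sqrt2)^2$ grows only like $e^{\sqrt2\,L}$. For example, with $R=20$, $L$ slightly larger and $r$ small, the left side is about $1.1\cdot 10^{13}$ and the right side about $4.9\cdot 10^{11}$. In general, in $\mathbb{M}^n(-1)$ the left side grows like $e^{(n-1)R}$ and the stated bound like $e^{\sqrt{n-1}\,L}$.

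So no argument can reach the stated constant. The correct constant is $\cosh(L\sqrt{|\kappa|})^{n-1}=D((n-1)\kappa,n,L)$, which is exactly what you proved. You should assert this outright rather than leave ``which normalization'' open.

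\textbf{The $\CD$ detour.} Passing through Petrunin's $\CD((n-1)\kappa,n)$ adds nothing. Sturm's Bishop--Gromov for $\CD(K,n)$ uses the model function $\sn^{K/(n-1)}=\sn^{\kappa}$, the same one you already have. Rerunning it therefore reproduces $\cosh(L\sqrt{|\kappa|})^{n-1}$ and cannot produce $\sqrt{|\kappa|/(n-1)}$.

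As you note, only the dependence on $(\kappa,n,L)$ is used downstream. The packing lemma and \thref{thm:CBBDirLim} therefore survive, with $\zeta$ computed from $\cosh(4\xi\sqrt{|\kappa|})^{n-1}$ instead of $D(\kappa,n,4\xi)$.
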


\section{Proof}

Assume that $(\tilde X,\tilde d), \xi$ and $\eta$ are as in the hypothesis of \thref{thm:CBBDirLim}.
Let $(X, d)$ be the completion of $(\tilde X,\tilde d)$.
By the above discussion, $(X,d)$ is a complete geodesic proper $\CBB(\kappa)$ space of dimension $n$.
From now on we work exclusively in $(X,d)$.
Define
\[
\chi := \frac{\eta}{\cosh(\sqrt{\kappa^-}\xi)}.
\]

\begin{lemma}
\th\label{thm:distBdBelow}
Let $a\in X$ and $E = \{ e_1, \dots, e_m \}\subset B(a,\xi)\setminus\{a\}$ satisfy the hypothesis of \thref{def:DirLim}.
Let $\geod_{[ae_j]}:[0,t_j]\to X$ be a unit-speed geodesic from $a$ to $e_j$.
Then, for any $i\neq j$ and $0<s\leq\min(t_i, t_j)$,
\begin{equation}
\label{eq:traceDist}
d(\geod_{[ae_i]}(s), \geod_{[ae_j]}(s)) \geq \frac{4\chi}{\pi^2}s.
\end{equation}
\end{lemma}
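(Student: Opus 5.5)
Fix $i\neq j$ and label so that $t_i=d(a,e_i)\geq t_j=d(a,e_j)$; write $b=e_i$, $c=e_j$. The natural candidate for the point $x$ in \thref{def:DirLim} is $x=\geod_{[ab]}(t_j)$, the point on $[ab]$ at distance $d(a,c)$ from $a$. Then $d(a,x)=d(a,c)$ and $d(x,b)=d(a,b)-d(a,c)$. The hypothesis therefore gives
\[
d(\geod_{[ab]}(t_j), c)\geq \eta\, t_j.
\]
This is the estimate at the scale $s=t_j=\min(t_i,t_j)$. The plan is to propagate it down to all $0<s\leq t_j$ using the monotonicity of model angles in $\CBB(\kappa)$ spaces.

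First, convert the distance bound at scale $t_j$ into a lower bound on the model angle $\theta:=\modang^\kappa(a^{x}_{c})$. The sides $d(a,x)=d(a,c)=t_j<\xi\leq\varpi^\kappa/2$ are admissible, so the model triangle exists. By the comparison law of cosines in $\mathbb{M}^2(\kappa)$, for an isosceles triangle with legs $t$ the base is
\[
\tilde\gamma^\kappa\{\theta;t,t\}=2\,\mathrm{arcsn}\big(\sn^\kappa(t)\sin(\theta/2)\big)\quad\text{(suitably interpreted)}.
\]
From this one wants $\tilde\gamma^\kappa\{\theta;t,t\}\leq C\,\theta\, t$ with $C$ depending on $\cosh(\sqrt{\kappa^-}\xi)$. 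For $\kappa\leq0$ this is the mean-value estimate of $\mathrm{arcsinh}$ and $\sinh$, which gives the factor $\cosh(\sqrt{\kappa^-}t)$. For $\kappa>0$ it follows from spherical geometry, since the base is at most $\theta t$. This yields $\theta\geq \eta t_j/(\cosh(\sqrt{\kappa^-}\xi)\,t_j)=\chi$ up to a constant such as $\sin(\theta/2)\geq\theta/\pi$.

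Second, by \cite[Theorem 8.17]{alexanderAlexandrovGeometry2024} the function $(t,t')\mapsto\modang^\kappa(a^{\geod_{[ab]}(t)}_{\geod_{[ac]}(t')})$ is non-increasing. Hence for $s\leq t_j$,
\[
\theta_s:=\modang^\kappa\big(a^{\geod_{[ab]}(s)}_{\geod_{[ac]}(s)}\big)\geq\theta\geq\chi' ,
\]
where $\chi'$ is the angle bound from the first step.

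Third, convert back, now needing a lower bound on the base. In the model plane,
\[
\tilde\gamma^\kappa\{\theta_s;s,s\}\geq 2\sn^\kappa(s)\sin(\theta_s/2)\geq \frac{2\theta_s}{\pi}\,s\cdot\frac{\sn^\kappa(s)}{s}.
\]
For $\kappa\geq 0$ use $s\leq\varpi^\kappa/2$, so $\sin(\sqrt\kappa s)\geq (2/\pi)\sqrt\kappa s$. Combining the two $2/\pi$-type factors (one from $\sin(\theta/2)\geq\theta/\pi$ in each direction) should produce the constant $4\chi/\pi^2$.

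The main obstacle is the precise bookkeeping of constants in the two trigonometric conversions, i.e.\ matching exactly $4/\pi^2$ and the $\cosh(\sqrt{\kappa^-}\xi)$ factor in $\chi$, uniformly in the sign of $\kappa$. I would treat $\kappa>0$, $\kappa=0$ and $\kappa<0$ separately in both conversions. The key tools are the concavity bound $\sin u\geq 2u/\pi$ on $[0,\pi/2]$ (valid since $\theta_s\leq\pi$ and $\sqrt\kappa s\leq\pi/2$) and the bound $\sinh u\leq u\cosh u$ for the negative case. A minor point is that $\chi\leq1/3$ keeps all angles in range.
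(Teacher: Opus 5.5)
Your strategy is the paper's: the same admissible point $x=\geod_{[ae_i]}(t_j)$, an angle bound at scale $t_j$, monotonicity of model angles, and a conversion back to distances. The final conversion, however, fails for $\kappa<0$. The inequality $\tilde\gamma^\kappa\{\theta_s;s,s\}\geq 2\sn^\kappa(s)\sin(\theta_s/2)$ is true for $\kappa\geq0$ (equality for $\kappa=0$, and $\arcsin y\geq y$ for $\kappa>0$). For $\kappa<0$ it is false for every nondegenerate triangle. The hyperbolic relation $\sinh(\sqrt{|\kappa|}\,\tilde\gamma/2)=\sinh(\sqrt{|\kappa|}s)\sin(\theta_s/2)$ together with $\sinh u\geq u$ gives the reverse inequality; for instance, at $\theta_s=\pi$ the base is $2s<2\sn^\kappa(s)$. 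So the factor $\sn^\kappa(s)/s\geq 1$ you meant to discard in the negative case is not available. What you need for $\kappa<0$ is the Euclidean-type bound $\tilde\gamma^\kappa\{\theta;s,s\}\geq 2s\sin(\theta/2)$. The paper gets it from convexity of $\sinh$ on $[0,\infty)$, namely $\sinh(\sqrt{|\kappa|}s)\sin(\theta/2)\geq\sinh(\sqrt{|\kappa|}s\sin(\theta/2))$; equivalently, model side lengths are non-increasing in $\kappa$. The only tool you name for the negative case, $\sinh u\leq u\cosh u$, belongs to the first conversion and does not help here.

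Your constant bookkeeping is also wrong, and it matters because the lemma asserts the specific constant $4\chi/\pi^2$. In the first conversion you need an upper bound on the base in terms of the angle, so the relevant inequality is $\sin(\theta/2)\leq\theta/2$; the bound $\sin(\theta/2)\geq\theta/\pi$ points the wrong way there. The paper instead uses the arc comparison $\tilde\gamma^\kappa\{\theta;t,t\}\leq\theta\,\sn^\kappa(t)$ together with $\sn^\kappa(t)\leq t\cosh(\sqrt{\kappa^-}t)$. This gives $\modang^\kappa(a^x_{e_j})\geq\chi$ exactly, with no loss, so there is no separate $\chi'$. After monotonicity you may replace $\theta_s$ by $\chi$, since the base is increasing in the angle. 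The factor $4/\pi^2$ then comes entirely from the second conversion when $\kappa>0$:
\begin{itemize}
\item one factor $2/\pi$ from $\sin(\chi/2)\geq\chi/\pi$;
\item one factor $2/\pi$ from $\sin(\sqrt\kappa s)\geq(2/\pi)\sqrt\kappa s$, valid because $s<\xi\leq\varpi^\kappa/2$.
\end{itemize}
For $\kappa\leq0$ you even get $2\chi/\pi$. Had you really lost a factor $2/\pi$ ``in each direction'', the spherical case would end at $8\chi/\pi^3$, which falls short of the statement.
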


\begin{proof}
Let $i\neq j$ be fixed assuming that $t_j\leq t_i$ and define $x:=\geod_{[ae_i]}(t_j)$.
Then $d(a,x)=t_j=d(a,e_j)$ and $d(x,e_i)=t_i-t_j$, so $x$ is admissible for the pair $(e_i,e_j)$ and
\[
d(x, e_j) \geq \eta t_j.
\]
Consider the triangle with vertex $a$ and opposite side $[xe_j]$.
In $\mathbb{M}^2(\kappa)$, for any $\alpha\in[0,\pi]$, it holds
\[
\tilde\gamma^\kappa\{ \alpha; t_j, t_j \} \leq \alpha \sn^\kappa(t_j),
\]
because the geodesic is shorter than the circle arc in polar coordinates.
One can easily show that $\sn^\kappa(t)\leq t \cosh(\sqrt{\kappa^-}t)$ for $t\geq0$.
Thus, we obtain
\[
\modang^\kappa(a_{b_j}^x) \geq \frac{\eta t_j}{\sn^\kappa(t_j)} \geq \chi.
\]

Now we fix a $0<s\leq t_j$.
By monotonicity of the model angle in CBB spaces,
\[
\modang^\kappa(a^{\geod_{[ae_j]}(s)}_{\geod_{[ae_i]}(s)}) \geq
\modang^\kappa(a^{\geod_{[ae_j]}(t_j)}_{\geod_{[ae_i]}(t_j)}),
\]
and so
\[
d(\geod_{[ae_i]}(s),\geod_{[ae_j]}(s))
= \tilde\gamma^\kappa\{ \modang^\kappa(a^{\geod_{[ae_i]}(s)}_{\geod_{[ae_j]}(s)}) ; s, s \}
\geq \tilde\gamma^\kappa\{ \chi ; s,s \}.
\]
It is well-known that $\sin(\alpha) \geq 2\alpha/\pi$ holds for any $\alpha\in[0, \pi/2]$.
Consider an isosceles triangle in $\mathbb{M}^2(\kappa)$ with angle $\chi$, and adjacent sides of length $s$.
We cut the triangle at its bisector from the vertex of angle $\chi$ and distinguish three cases:
\begin{enumerate}[wide, labelwidth=0pt, labelindent=0pt]
\item[($\kappa =0$)]
By law of sines,
\[
\tilde\gamma^\kappa\{\chi;s,s\} = 2s\sin\left(\frac{\chi}{2}\right) \geq \frac{2\chi}{\pi}s.
\]
\item[($\kappa>0$)]
By spherical law of sines,
\[
\sin\left( \sqrt{\kappa}\frac{\tilde\gamma^\kappa\{\chi; s,s\}}{2} \right)
= \sin(\sqrt{\kappa} s) \sin\left(\frac{\chi}{2} \right).
\]
Hence, as $\sqrt{\kappa}s \leq \sqrt{\kappa}\xi \leq \pi/2$,
\[
\begin{aligned}
\tilde\gamma^\kappa \{\chi; s,s\}
&= \frac{2}{\sqrt{\kappa}} \arcsin\left( \sin(\sqrt{\kappa} s) \sin\left(\frac{\chi}{2} \right) \right) \\
&\geq \frac{2}{\sqrt{\kappa}} \sin(\sqrt{\kappa} s) \sin\left(\frac{\chi}{2} \right) \\
&= 2 \sn^\kappa(s) \sin\left( \frac{\chi}{2} \right) \\
&\geq \frac{4}{\pi} s \sin\left( \frac{\chi}{2} \right) \\
&\geq \frac{4 \chi}{\pi^2} s.
\end{aligned}
\]
\item[($\kappa<0$)]
By hyperbolic law of sines and convexity of $\sinh$ on $[0,\infty)$,
\[
\begin{aligned}
\sinh\left(\sqrt{|\kappa|} \frac{\tilde\gamma^\kappa}{2}\right)
&= \sinh(\sqrt{|\kappa|} s) \sin\left( \frac{\chi}{2} \right) \\
&\geq \sinh\left( \sqrt{|\kappa|} s \sin\left(\frac{\chi}{2} \right) \right).
\end{aligned}
\]
Hence
\[
\tilde\gamma^\kappa\{\chi; s,s\} \geq 2s\sin\left( \frac{\chi}{2} \right)
\geq \frac{2\chi}{\pi} s.
\]
\end{enumerate}
From the three cases combined, we conclude that
\[
d(\geod_{[ae_i]}(s),\geod_{[ae_j]}(s)) \geq \frac{4\chi}{\pi^2} s. \qedhere
\]
\end{proof}

Now we want a packing number for the set $\{\geod_{[ae_i]}(s)\}$ given by \thref{thm:distBdBelow}.
One could hope to directly use a packing characterization of the finite dimension \cite[Theorem 15.13]{alexanderAlexandrovGeometry2024},
that is, for any $0<r\leq\varpi^\kappa, \varepsilon>0$ and $p\in X$,
\[
\pack_\varepsilon B_{\mathbb{M}^n(\kappa)}[r] \geq \pack_\varepsilon B(p,r).
\]
However, in \eqref{eq:traceDist} the chosen $s$ appears and packing number will still depend on it.
We thus need to estimate $\pack_\varepsilon B(p,r)$ independently of $s$.

\begin{lemma}[Bishop-Gromov packing]
\th\label{th:BGpack}
Let $a\in X$, $r>0$, $0<\delta\leq 2$, and let $E\subset B[a,r]$ be a finite set whose points have pairwise distances $\geq\delta r$.
Then
\[
\# E \leq D(\kappa, n, 4r) \left(\frac{9}{\delta}\right)^n.
\]
\end{lemma}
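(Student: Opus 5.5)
We argue by the standard volume-comparison packing method: we put disjoint small balls around the points of $E$, enclose them all in one big ball, and compare the volumes of the two using \thref{thm:BishopGromov}. Set $\rho := \delta r/2$. Since the points of $E$ are pairwise at distance $\geq\delta r$, the open balls $B(e,\rho)$, $e\in E$, are pairwise disjoint. Each of them lies in $B(a, r+\rho)$, and $r+\rho\leq 2r$ because $\delta\leq 2$. Because $\mathcal{H}^n$ is a Radon measure with full support, every ball has finite positive measure, so
\[
\sum_{e\in E}\mathcal{H}^n(B(e,\rho)) \leq \mathcal{H}^n(B(a,2r)).
\]

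Next we bound each small ball from below, independently of the centre $e$. For every $e\in E$ we have $B(a,2r)\subset B(e, 3r)$, since $d(a,e)\leq r$. We apply \thref{thm:BishopGromov} centred at $e$, with radii $\rho<3r<4r=L$; this is where the constant $D(\kappa,n,4r)$ enters. It gives
\[
\mathcal{H}^n(B(a,2r)) \leq \mathcal{H}^n(B(e,3r)) \leq D(\kappa,n,4r)\left(\frac{3r}{\rho}\right)^n \mathcal{H}^n(B(e,\rho)) = D(\kappa,n,4r)\left(\frac{6}{\delta}\right)^n\mathcal{H}^n(B(e,\rho)).
\]
So every term of the sum is at least $D^{-1}(\delta/6)^n\,\mathcal{H}^n(B(a,2r))$. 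Dividing by $\mathcal{H}^n(B(a,2r))>0$ yields $\#E\leq D(\kappa,n,4r)(6/\delta)^n$, which is even better than the claimed $(9/\delta)^n$.

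The slack can absorb a less careful choice, for instance $\rho=\delta r/3$ with closed balls. With that choice the ratio of radii is $3r/(\delta r/3)=9/\delta$, which exactly reproduces the paper's constant. We would use this choice if we worried about boundary effects or strict inequalities in \thref{thm:BishopGromov}.

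The only step needing care is that \thref{thm:BishopGromov} is stated with strict inequalities $0<r<R<L$ and requires the hypotheses of a complete locally compact $\CBB(\kappa)$ space. Both hold for the completion $(X,d)$ we are working in. We must also check that $\rho<3r<4r$, which holds since $\delta\leq 2$, and that $D$ is non-decreasing in $L$, so that any intermediate choice of $L$ can be replaced by $4r$. There is no real obstacle beyond bookkeeping of these constants.
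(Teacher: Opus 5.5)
Your proof is correct and follows the paper's argument: disjoint small balls around the points of $E$ inside $B(a,2r)$, the inclusion $B(a,2r)\subset B(e,3r)$, and \thref{thm:BishopGromov} centred at each $e\in E$ with $L=4r$. The only difference is that you use radius $\delta r/2$ where the paper uses $\delta r/3$. Your balls are still disjoint because $d(e,e')\geq\delta r$, and this gives the slightly sharper bound $D(\kappa,n,4r)(6/\delta)^n$.
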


\begin{proof}
The proof follows a standard packing argument provided a measure is doubling.
The balls $\{B(z,\delta r/3)\}_{z\in E}$ are pairwise disjoint, because distinct centers satisfy $d(z,z')\geq\delta r>2\delta r/3$,
and they are contained in $B\big(a,(1+\delta/3)r\big)\subset B(a,2r)$ since $\delta\leq2$.
For each $z\in E$ we have $B(a,2r)\subset B(z,3r)$, and so, by \thref{thm:BishopGromov},
\[
\mathcal{H}^n\left(B\big(z,\tfrac{\delta r}{3}\big)\right)
\geq
\mathcal{H}^n\big(B(z,3r)\big)\frac{v_{\kappa,n}(\delta r/3)}{v_{\kappa,n}(3r)}
\geq
\mathcal{H}^n\big(B(a,2r)\big) D(\kappa, n, 4r)^{-1}\left(\frac{\delta}{9}\right)^n.
\]
Summing over $z\in E$, using disjointness and $0<\mathcal{H}^n(B(a,2r))<\infty$, we obtain
\[
\# E\cdot \mathcal{H}^n\big(B(a,2r)\big) D(\kappa, n, 4r)^{-1}\left(\frac{\delta}{9}\right)^n
\leq\mathcal{H}^n\big(B(a,2r)\big),
\]
yielding the desired bound.
\end{proof}

Let $a\in X$ and let $E\subset B(a,\xi)\setminus\{a\}$ satisfy the hypothesis of \thref{def:DirLim}.
It suffices to prove the bound when $E$ is finite, say $E=\{e_1,\dots,e_m\}$.
Let $\gamma_i$ be as in \thref{thm:distBdBelow}. By \eqref{eq:traceDist}, the points
\[
p_i:=\gamma_i(s)\in B[a,s],\qquad i=1,\dots,m,
\]
have pairwise distances $\geq 4\chi s/\pi^2$.
Applying \thref{th:BGpack} with $r=s$ and $\delta=4\chi/\pi^2\leq2$ yields
\[
m\leq D(\kappa, n, 4\xi)\left(\frac{9\pi^2}{4\chi}\right)^n.
\]
With the constants above and
\[
\zeta = \left\lceil D(\kappa, n, 4\xi) \left( \frac{9\pi^2\cosh(\sqrt{\kappa^-}\xi)}{4\eta} \right)^n \right\rceil,
\]
the distance $d$ is $(\xi,\eta,\zeta)$-directionally limited at $X$.
As $(\tilde X,\tilde d)$ is geodesic, $\tilde d=d$ on $\tilde X$ and so $\tilde d$ is $(\xi,\eta,\zeta)$-directionally limited
at $\tilde X$.
This achieves the proof of \thref{thm:CBBDirLim}.

\section{Acknowledgements}

This research was funded by the Deutsche Forschungsgemeinschaft (DFG).
The author is grateful to his advisor Batu Güneysu for his constant support.

\bibliographystyle{abbrvnat}
\bibliography{references}

\end{document}